\documentclass[12pt]{amsart}
\usepackage{amsmath, amssymb, amsthm}
\usepackage{graphicx}
\usepackage[margin=1in]{geometry}

\newcommand{\R}{\mathbb{R}}

\theoremstyle{plain}
\newtheorem{thm}{Theorem}[section]
\newtheorem*{thm*}{Theorem}

\theoremstyle{definition}

\theoremstyle{remark}
\newtheorem*{rem}{Remark}

\begin{document}

\author{Dami Lee}
\title{On a triply periodic polyhedral surface whose vertices are Weierstrass points}
\address{Department of Mathematics, Indiana University, Bloomington, IN 47405}
\email{damilee\char`\@indiana.edu}
\urladdr{}
\date{}

\maketitle

\begin{abstract}
In this paper, we will construct an example of a closed Riemann surface $X$ that can be realized as a quotient of a triply periodic polyhedral surface $\Pi \subset \R^3$ where the Weierstrass points of $X$ coincide with the vertices of $\Pi.$ First we construct $\Pi$ by attaching Platonic solids in a periodic manner and consider the surface of this solid. Due to periodicity we can find a compact quotient of this surface, which has genus $g = 3.$ We claim that the resulting surface is regular in the hyperbolic sense. By regular, we mean that the automorphism group of $X$ is transitive on flags. The symmetries of $X$ allow us to construct hyperbolic structures and various translation structures on $X$ that are compatible with its conformal type. The translation structures are the geometric representations of the holomorphic 1-forms of $X,$ which allow us to identify the Weierstrass points.
\end{abstract}

\section{Introduction}
In this paper we generalize the construction of triply periodic polyhedral surfaces by Coxeter and Petrie \cite{C}. They introduced three triply periodic regular polyhedra whose quotient surface by euclidean translations is a surface of genus $g = 3.$ In Schlafli symbols, they are denoted as $\{4, 6 | 4\}, \{6, 4 | 4\}, \{6, 6 | 3\}$ where $\{p, q | r\}$ represents a regular polyhedron that is constructed by $q$ regular $p$-gons at each vertex forming regular $r$-gonal holes. The polyhedral structures induce cone metrics. Furthermore the cone metrics induce a conformal structure on the underlying Riemann surface. These three surfaces are related to minimal surfaces in the sense that they are polygonal approximations of well known minimal surfaces, that also carry the same conformal structure. $\{4, 6 | 4\}$ and its dual $\{6, 4 | 4\}$ serve as the approximation of Schwarz's P-surface and $\{6, 6 | 3\}$ as Schwarz's D-surface. Moreover for instance the vertices of $\{4, 6 | 4\}$ coincide with the Weierstrass points of the surface that is conformally equivalent.\\

\begin{figure}[htbp] 
\centering
\begin{minipage}{.5\textwidth}
	\centering
	\includegraphics[width=2in]{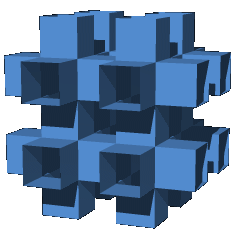}
	\end{minipage}
\begin{minipage}{.5\textwidth}
	\centering
	\includegraphics[width=2in]{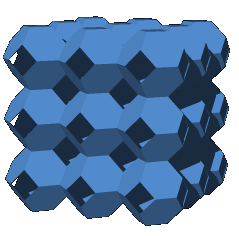}
	\end{minipage}
\begin{minipage}{.5\textwidth}
	\centering
	\includegraphics[width=2in]{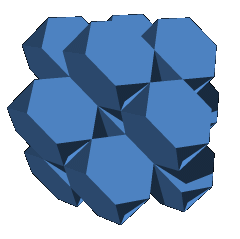}
	\label{coxeter}
	\end{minipage}
	\caption{Infinite regular skew polyhedra in 3-space $\{4,6|4\}, \{6,4|4\},$ and $\{6,6|3\}.$ Reprinted from Wikipedia, The Free Encyclopedia, by Tom Ruen, Retrieved from 
$<\text{https://en.wikipedia.org/wiki/Regular\underline{\hspace{0.1in}}polyhedron.}>$}

\end{figure}

This paper introduces an example of a triply periodic polyhedral surface that is regular in a slightly weaker sense but shares many properties with the surfaces that Coxeter and Petrie introduce. Adopting the Schlafli symbols, this surface is of type $\{3, 8 | 3\}$ and when quotiented by its Euclidean translations we get a genus $g = 3$ Riemann surface. This turns out to be a cyclically branched cover over a thrice punctured sphere. This allows us to find various other cone metrics and specifically those that are translation structures. The translation structures give us enough holomorphic 1-forms to compute the Wronksi metric. For our surface, the Wronski metric happens to coincide with the cone metric. This allows us to find all Weierstrass points, identify the conformal automorphism group, and rule out that the surface is hyperelliptic. In particular, this surface cannot be conformally equivalent to a triply periodic minimal surface \cite{M}.\\

We relax the definition of regularity on the polyhedral surface to find more symmetries on the underlying abstract surface. For platonic solids and the surfaces that Coxeter and Petrie introduce, one requires the euclidean isometries to be transitive on flags which we define in Section~\ref{automorphisms}. The surface that we introduce also have regular polygonal faces and with the same valency at each vertex. However it is not flag transitive in the eucliean sense. Once we consider its hyperbolic structure we can find all hyperbolic automorphisms on the underlying Riemann surface and show that the automorphism group acts transitively on flags.\\

This surface has a nice distribution of Weierstrass points, a large automorphism group, and serves as an explicit example of a non-hyperelliptic Riemann surface whose flat structure, hyperbolic structure, and algebraic expressions are determined explicitly . This is a part of an ongoing project on relations between triply periodic polyhedral surfaces and triply periodic minimal surfaces.\\

This paper is organized as follows:\\
\begin{itemize}
\item In Section~\ref{construction}, we construct a polyhedral surface $\Pi$ and find the compact quotient $X$ of $\Pi$ by its periodicity in euclidean space.
\item In Section~\ref{structure}, we observe the hyperbolic tiling of the fundamental piece of $\Pi$ and discuss the geodesics on $X,$ which later guides us to finding hyperbolic automorphisms and translation structures.
\item In Section~\ref{automorphisms}, we determine the automorphism group of $X.$ 
\item In Section~\ref{form}, we find translation structures on $X,$ which give us holomorphic 1-forms on the surface. This gives us an algebraic description of this surface.
\item In Section~\ref{Weierstrass}, we find Weierstrass points of $X$ and show that the set of all Weierstrass points coincides with the set of all vertices on $\Pi.$
\end{itemize}

The author would like to thank Matthias Weber for his support.

\section{The construction of a triply periodic polyhedral surface}
\label{construction}

In this section we will build a triply periodic polyhedral surface $\Pi$ that embeds in euclidean space by attaching regular octahedra along faces. For instance, we may start with two regular octahedra and glue them along a pair of faces. Since every octahedron consists of four pairs of parallel faces we can continue attaching octahedra so that the new octahedra are attached in a parallel manner. This gives us a tower of octahedra one on top of each other. However, our goal is to build a triply periodic figure so instead of gluing octahedra on parallel faces, we construct our surface with two different types of octahedra. First we start with two regular octahedra of the same size. One type of octahedron (Type 1) will be a regular octahedron minus two parallel faces. The other type (Type 2) will be a regular octahedron minus four non-adjacent faces. We will glue Type 1 and Type 2 octahedra on their missing faces in an alternating order so that on every Type 1 octahedron we have two Type 2 octahedra attached and on every Type 2 octahedron we have four Type 1 octahedra attached. First we claim that the surface constructed this way embeds in $\R^3.$\\

\begin{thm} The surface $\Pi$ constructed via alternating octahedra embeds in $\R^3.$
\end{thm}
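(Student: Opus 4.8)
\medskip
\noindent\textit{Proof plan.}
The plan is to reduce the statement to the assertion that the regular octahedra making up $\Pi$ have pairwise disjoint interiors, and then to verify that by a bounded computation using the periodicity of the construction. Fix coordinates so that a regular octahedron has circumradius $3$, and place one Type~2 octahedron $\Omega_0$ centered at the origin with vertices $(\pm 3,0,0),(0,\pm 3,0),(0,0,\pm 3)$, so that its eight face planes are $\pm x\pm y\pm z=3$; take the four removed faces of $\Omega_0$ to be those whose outward normal has an even number of negative entries. Each gluing in the construction is forced, since the octahedron attached across a removed face $f$ is the unique regular octahedron sharing $f$ and lying on the opposite side of the plane of $f$, namely the reflection of a regular octahedron in that plane; thus the octahedral complex underlying $\Pi$ is well defined and $\Pi$ is the union of the faces that are \emph{not} removed. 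Two octahedra glued along a common face automatically have disjoint interiors, since each lies in one of the two closed half-spaces bounded by the plane of that face. More importantly, if \emph{every} pair of octahedra in the complex has disjoint interiors, then two distinct octahedra $O,O'$ can meet only along a common face, edge, or vertex --- a transversal crossing of a $2$-face of $O$ with a $2$-face of $O'$ would force $\mathrm{int}(O)\cap\mathrm{int}(O')\neq\varnothing$ --- and a common $2$-face is always a gluing face, hence absent from both; so the non-removed faces form an embedded $2$-complex, i.e.\ the tautological map $\Pi\to\R^3$ is injective. It therefore suffices to prove the disjointness of interiors.

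To organize the disjointness check, I first record the periodicity of $\Pi$. Passing from a Type~2 octahedron across one of its removed faces into the adjacent Type~1 octahedron, and then across the \emph{opposite} removed face of that Type~1 octahedron into the next Type~2 octahedron, is a composition of two reflections in parallel planes (the two removed faces of a Type~1 octahedron are parallel by hypothesis), hence a pure translation; doing this in the four ``strand'' directions $(1,1,1),(1,-1,-1),(-1,1,-1),(-1,-1,1)$ yields four vectors summing to zero which generate a rank-$3$ lattice $\Lambda$ leaving $\Pi$ invariant, with only finitely many octahedra per fundamental domain. The delicate point is that along each strand the two mirror images of the Type~2 piece alternate, so the naive orientations of the Type~2 octahedra are \emph{not} intertwined by these translations; one must confirm that the alternating rule closes up globally, or, equivalently and more cleanly, present the $\Lambda$-periodic family of octahedra explicitly and check that it realizes the prescribed gluings. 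I expect this bookkeeping --- pinning down $\Lambda$ and verifying consistency of the alternating pattern --- to be the main obstacle; once it is in place, $\Pi$ is locally finite, so only finitely many octahedra lie within a bounded neighborhood of any fixed one.

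It remains to carry out the finite check. Two octahedra can have intersecting interiors only if their centers are at distance less than twice the circumradius; by local finiteness and the $\Lambda$-symmetry only finitely many such pairs occur, and they fall into two kinds. A pair glued along a common face is disjoint, as already noted. For a pair of Type~1 octahedra attached to a common Type~2 octahedron $C$, the two share only one vertex of $C$, and one verifies from the coordinates that the plane of the removed face of $C$ to which one of them is glued separates them --- that octahedron lying in the closed half-space beyond the plane, the other meeting the plane only in the shared vertex. All remaining pairs have centers at distance at least twice the circumradius and hence lie in disjoint circumscribed balls. This exhausts the cases, so the octahedra of $\Pi$ have pairwise disjoint interiors, whence, by the reduction of the first paragraph, $\Pi$ embeds in $\R^3$.
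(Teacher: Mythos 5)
Your overall strategy --- reduce embeddedness to pairwise disjointness of the octahedral interiors, then control the pairs via a translation lattice and a finite check --- is a legitimate route and genuinely different from the paper's. But, as you yourself flag, the load-bearing step is never carried out: you do not exhibit the periodic family of octahedra explicitly, nor verify that the alternating Type 1/Type 2 gluing rule closes up consistently into a configuration invariant under a rank-$3$ lattice. Without that there is no actual list of octahedra, no lattice $\Lambda$, and no finite set of nearby pairs, so the concluding case check is performed against a configuration that has not been shown to exist. Your own worry is well founded: an octahedron has exactly two ways to choose four pairwise non-adjacent faces, these alternate along each strand, and consequently the single-strand translations you propose need not preserve $\Pi$ at all (the paper's fundamental piece contains \emph{two} Type 2 octahedra, consistent with only an index-two sublattice of your $\Lambda$ acting); so even the statement of your periodicity lemma needs repair, not just its proof.

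The paper resolves exactly this difficulty with one concrete device you are missing: inscribe each Type 1 octahedron in a unit cube, with its six vertices dividing the cube's edges in ratio $1:3$; take the sub-family of cubes of the standard tiling in which no two cubes share a face; and observe that where four such cubes meet at a vertex, their truncated corners are completed by a Type 2 octahedron glued along the four truncation faces. This exhibits the entire configuration at once, settling well-definedness, periodicity, and the alternation bookkeeping simultaneously, and it makes disjointness automatic because distinct octahedra occupy distinct cells of a decomposition of $\R^3$. To salvage your approach you would need an equivalent explicit description (coordinates of all octahedron centers together with their removed-face classes) before the finite check can even begin. A smaller point: your reduction excludes transversal crossings of faces but not coplanar partial overlaps of two non-removed faces, which would equally destroy injectivity of $\Pi\to\R^3$; that case should also be covered by the finite check.
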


\begin{proof} The idea is to place regular octahedra in a regular cube and tile space with cubes. We let the six vertices of the octahedron sit on the edges of the cube so that they divide the edge with a 1:3 ratio. The reason for this ratio is because if we truncate the cube along the faces of the octahedron, the truncated part that is facing us has the same size as one-eighth of the octahedron inside the cube. We can view this octahedron as a Type 1 octahedron where the face facing us and its opposite face are the two missing faces. By space-filling property of cubes, we can place eight of these cubes around that truncated vertex and get Figure~\ref{octa4-2}.

\begin{figure}[htbp] 
\centering
\begin{minipage}{.5\textwidth}
	\centering
	\includegraphics[width=2in]{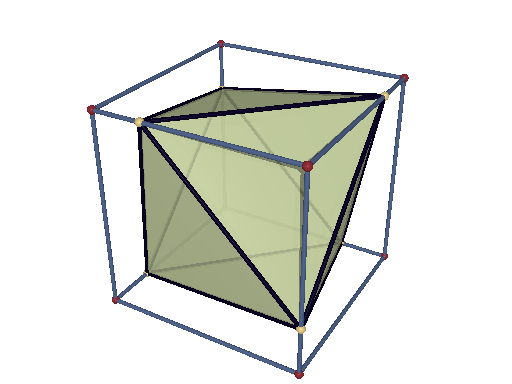}
	\caption{}
	\label{octa4-1}
\end{minipage}%
\begin{minipage}{.5\textwidth}
	\centering
	\includegraphics[width=2in]{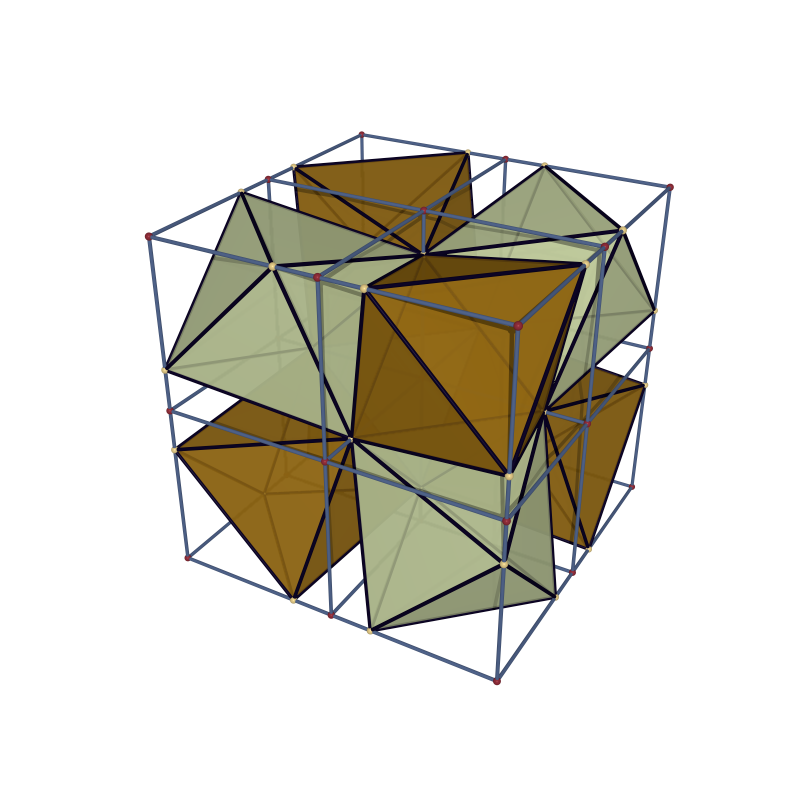}
	\caption{}
	\label{octa4-2}
\end{minipage}
\end{figure}

However for the purpose of this paper we will need only four cubes around a vertex so that no two cubes share faces with each other. We will place a Type 2 octahedron where the four cubes meet as in the following figure.

\begin{figure}[htbp] 
	\centering
	\includegraphics[width=2in]{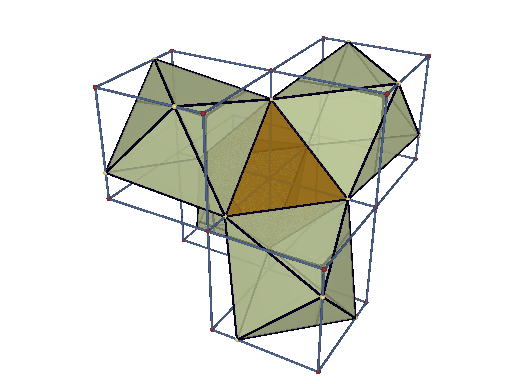} 
	\caption{}
	\label{octa4}
\end{figure}

Since Type 1 octahedra miss two parallel faces, we can continue and attach other Type 2 octahedra on the opposite sides of Type 1 octahedra. Since the cubes in which Type 1 octahedra sit in form a subtiling of $\R^3,$ our surface too embeds in $\R^3.$

\begin{figure}[htbp] 
\centering
\begin{minipage}{.5\textwidth}
	\centering
	\includegraphics[width=2.5in]{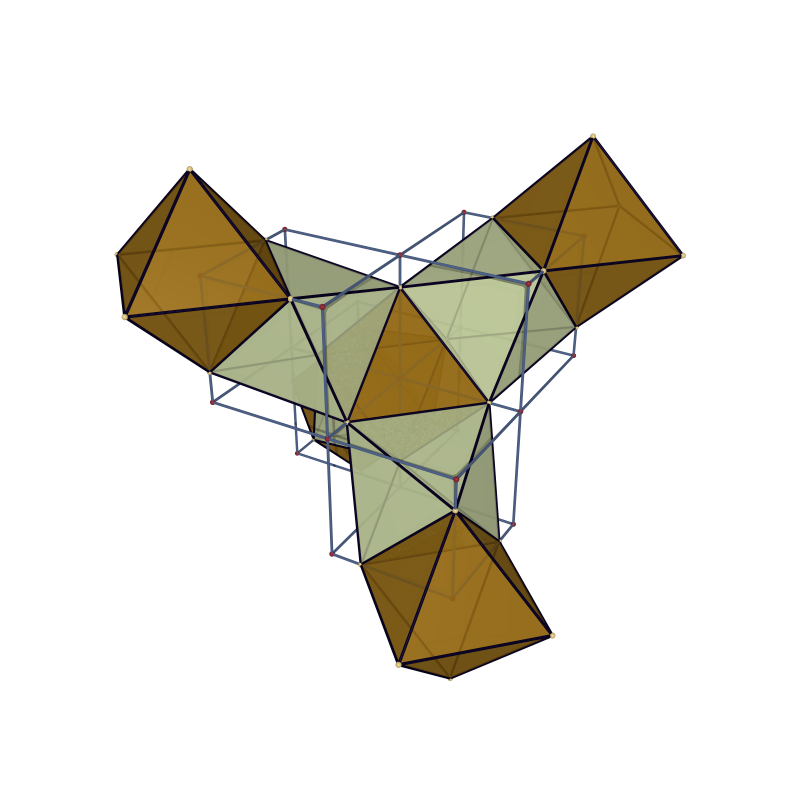}

\end{minipage}%
\begin{minipage}{.5\textwidth}
	\centering
	\includegraphics[width=2.5in]{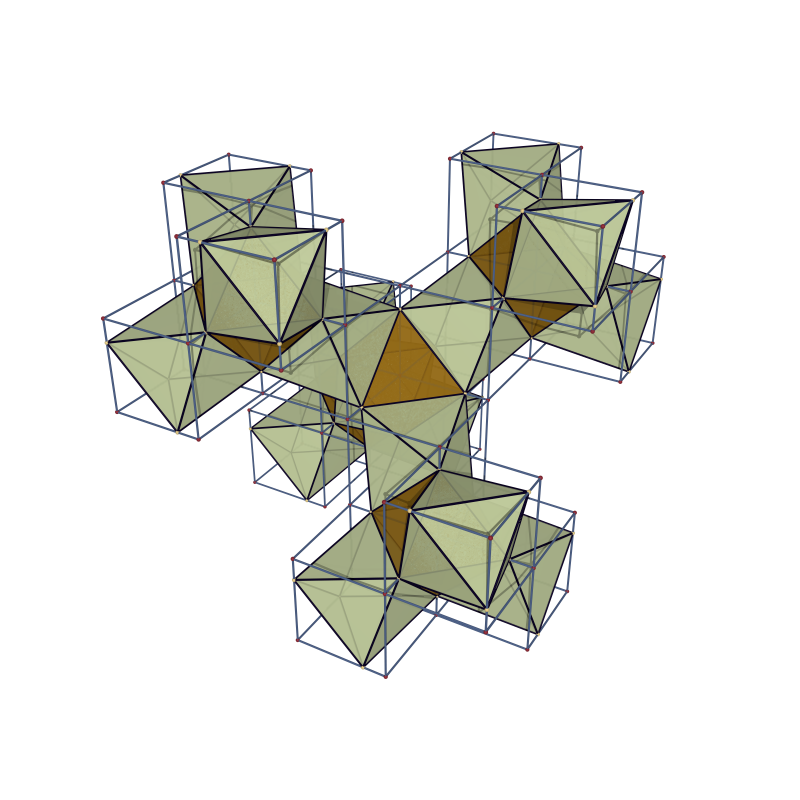}
\end{minipage}
	\caption{Construction of $\Pi$}
	\label{octa4-5}
\end{figure}

\end{proof}

Figure~\ref{octa4-5} shows us that this surface $\Pi$ is triply periodic. In other words, the surface is invariant under three independent translations. Hence we can identify the faces via translations and get a smallest piece say $X_0$ that spans $\Pi.$ Now we would like to prove that $X_0$ which we will call the fundamental piece of $\Pi$ can be presented as the following Figure~\ref{funda}.

\begin{figure}[htbp] 
	\centering
	\includegraphics[width=2.5in]{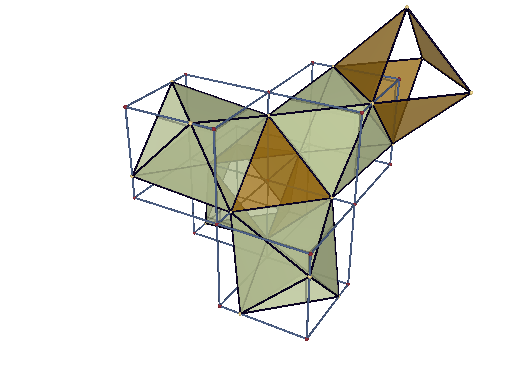} 
	\caption{Fundamental piece, $X_0$}	
	\label{funda}
\end{figure}

\begin{thm}\label{thm funda} $X_0$ is the fundamental piece of the triply periodic surface $\Pi$ which we can span via three independent parallel translations.
\end{thm}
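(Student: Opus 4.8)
The plan is to exhibit the fundamental piece $X_0$ explicitly as a union of octahedral pieces and then verify two things: first, that the three translations coming from the embedding in $\R^3$ (which by Theorem~1.1 are generated by the cube-lattice periods, rescaled) carry $X_0$ to a tiling of all of $\Pi$; and second, that no smaller sublattice of translations already suffices, so that $X_0$ is genuinely fundamental. Concretely, I would first fix the lattice: from the construction, each Type~1 octahedron sits inside one cube of a cubical subtiling of $\R^3$, and the Type~2 octahedra sit at the points where four such cubes meet without sharing faces. I would identify the translation lattice $\Lambda$ generated by the vectors taking one Type~1 cube to its neighbors that are again occupied by Type~1 octahedra; because only four of the eight cubes around each shared vertex are used, $\Lambda$ is an index-two (face-centered-cubic-type) sublattice of the full cubic lattice, and I would write down the three generators explicitly in coordinates.

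Next I would count faces. Each Type~1 octahedron contributes $6$ of its $8$ triangular faces to $\Pi$ (two faces are missing), and each Type~2 octahedron contributes $4$ faces; moreover every face of $\Pi$ belongs to exactly one Type~1 and one Type~2 octahedron, since gluing is along the missing faces. Counting incidences, in one period there are (up to the precise bookkeeping) a fixed small number of Type~1 and Type~2 octahedra — I would determine these numbers from the lattice $\Lambda$ — and hence a fixed number of faces, edges, and vertices in $X_0$. Picking representatives of each $\Lambda$-orbit of octahedra and assembling the corresponding partial octahedra gives precisely the configuration drawn in Figure~\ref{funda}; I would check that this union is connected, that its boundary faces are identified in pairs by the generators of $\Lambda$, and that these identifications are consistent (the quotient is a closed surface). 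This establishes that $X_0$ spans $\Pi$ under the three translations.

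Finally, to see that $X_0$ is the \emph{smallest} such piece, I would argue that $\Lambda$ is the full translation subgroup of the symmetry group of $\Pi$: any translation preserving $\Pi$ must map Type~1 octahedra to Type~1 octahedra (they are distinguishable from Type~2 by the local branching pattern — two missing faces versus four), hence must lie in the lattice generated by the centers of the Type~1 octahedra together with the constraint of preserving the alternating gluing pattern, which is exactly $\Lambda$. Therefore no proper sublattice of translations tiles $\Pi$ by a proper subset of $X_0$, and $X_0$ is fundamental. I expect the main obstacle to be the bookkeeping in the middle step: correctly identifying which boundary faces of the chosen assembly of partial octahedra are glued to which by the lattice generators, and verifying that the resulting quotient is the claimed closed surface rather than something with extra identifications or a lower-index lattice. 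This is essentially a careful combinatorial check against the figures, and once the coordinates of $\Lambda$ and the octahedron centers are pinned down it becomes routine, if tedious.
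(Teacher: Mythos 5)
Your overall strategy is the same as the paper's: exhibit the assembly of four Type~1 and two Type~2 octahedra shown in Figure~\ref{funda}, verify that its boundary faces are identified in pairs by three independent translations, and conclude that the quotient is a closed genus-$3$ surface. Two issues, though. First, the step you set aside at the end as ``routine, if tedious'' --- determining which boundary faces are glued to which by the lattice generators --- is essentially the \emph{entire} content of the paper's proof, and the paper disposes of it not by coordinate bookkeeping but by a short geometric observation: of the removed faces, six remain unglued on the boundary of $X_0$; the two Type~2 octahedra are attached to a common Type~1 octahedron and hence are parallel translates of one another, so a missing face of one Type~1 octahedron is matched by a parallel missing face of the second Type~2 octahedron, giving one identification by translation; the order-three rotational symmetry of $X_0$ then produces the remaining two pairs. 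Since everything else (connectivity, the face count, the genus computation $12-48+32=2-2g$) is easy, deferring exactly this verification leaves the proposal a plan rather than a proof. Second, your incidence claim that ``every face of $\Pi$ belongs to exactly one Type~1 and one Type~2 octahedron'' is wrong: the faces shared by a Type~1 and a Type~2 octahedron are precisely the \emph{removed} faces, which are not faces of $\Pi$ at all. Each actual face of $\Pi$ lies in exactly one octahedron, which is what yields $4\cdot 6+2\cdot 4=32$ faces in $X_0$; taken literally, your statement would double-count. On the positive side, your attention to minimality --- arguing that the lattice $\Lambda$ is the full translation subgroup of the symmetries of $\Pi$ because translations must preserve the two types of octahedra --- addresses a point the paper merely asserts, and would be a worthwhile addition if carried out.
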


\begin{proof} Notice that this piece has four Type 1 octahedra and two Type 2 octahedra. So in Figure~\ref{funda}, there are six faces that are removed from the regular octahedra. We will show that these six faces can be identified within themselves via parallel translation.\\

Let's start from a face that is missing from one of the Type 1 octahedra. Our goal is to find a face that can be identified with this face via parallel translation. In Figure~\ref{funda} there is a Type 2 octahedron attached to this Type 1 octahedron. However it does not have a missing face that is parallel to the face we started with so we go to the next Type 2 octahedron. The two Type 2 octahedra are parallel since they are both attached to the same Type 1 octahedron. The second Type 2 is missing a face that is parallel to the face that we started from. Therefore these two faces can be identified to each other via parallel translation.\\

Due to the order three rotational symmetry this figure has, the identification of the other four faces follows. Since Figure~\ref{funda} is the smallest piece that can be translated to construct $\Pi,$ it actually is the fundamental piece. Since this piece is topologically a sphere with six holes, identifying these in pairs gives us that this is a surface of genus $g = 3.$ \\

Alternatively, counting the number of vertices, edges, and faces also gives us the same genus where $12 - 48 + 32 = 2 - 2 g,$ hence $g = 3.$
\end{proof}

We used the fact that there is an order three rotational symmetry on $X_0$ but not all edges are similar in euclidean space. Hence the automorphism group generated by euclidean symmetries is not transitive and the surface is not regular. However, instead of considering uclidean isometries we will consider a weaker definition of regularity and consider the isometries on its abstract quotient surface. We call the abstract surface $X$ and show that this is regular. We can see from $\Pi$ that the valency at every vertex is eight hence we have hope that the automorphism group of $X$ would be at least vertex-transitive. In the following sections, we will prove that the hyperbolic structure shows that the abstract quotient surface $X$ is actually regular via its group of hyperbolic isometries.

\section{Hyperbolic structure on $X$}
\label{structure}

Our goal in this section is to see the symmetries of $X,$ not necessarily euclidean and to do so we will consider the hyperbolic structure of the fundamental piece that is compatible with its conformal type. Using the fact that eight faces meet at every vertex, we can tile the hyperbolic disk with $(\frac{\pi}{4}, \frac{\pi}{4}, \frac{\pi}{4})$ triangles. However, we know that the fundamental piece consists of 32 triangles so we can identify the 32 triangles in the following figure.

\begin{figure}[htbp] 
   \centering
   \includegraphics[width=3in]{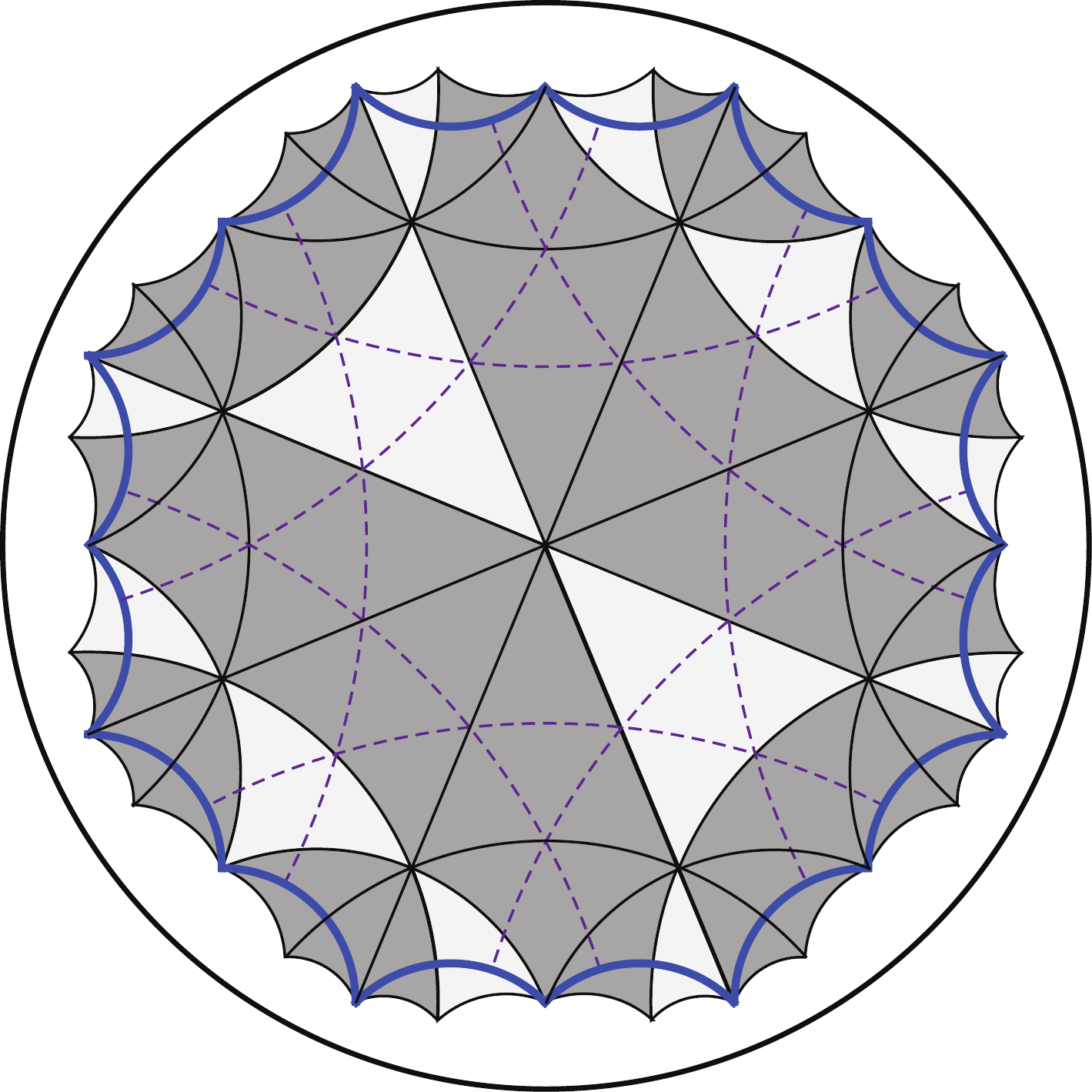} 
   \caption{Hyperbolic realization of fundamental piece}
   \label{hyperbolic_tiling}
\end{figure}

First we will discuss Petrie polygons on $\Pi$ before mentioning the geodesics on this surface and then prove that Figure~\ref{hyperbolic_tiling} represents the fundamental piece $X_0.$ By a Petrie polygon, we mean a piecewise geodesic connecting midpoints of the edges so that the ``clipped'' vertices lie alternatingly to the left and the right of the polygon. In our case, the Petrie polygons will automatically be smooth.

\begin{figure}[htbp] 
   \centering
   \includegraphics[width=3in]{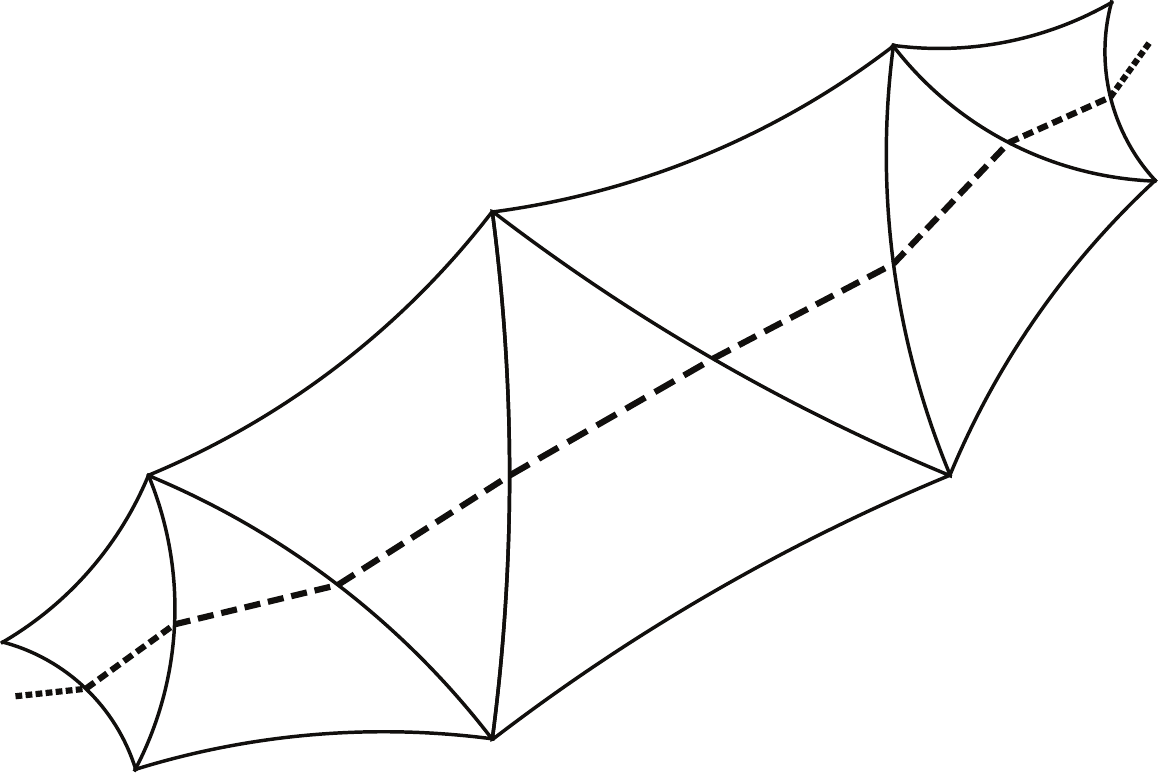} 
   \caption{Petrie polygon}
   \label{Petrie polygon}
\end{figure}

\begin{thm} All Petrie polygons on $\Pi$ correspond to a closed geodesic on $X.$ Furthermore, all geodesics of this form have the same length.
\end{thm}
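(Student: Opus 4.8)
\emph{Proof proposal.} The plan is to split the statement into two parts: (a) every Petrie polygon is a geodesic of the flat cone metric and closes up in the compact quotient $X$; (b) all of the resulting closed geodesics have the same length.

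For (a) I would first check the geodesic property locally. Inside each triangular face the Petrie polygon is, by construction, the straight segment joining the midpoints of two of its edges — a midsegment, hence of length $s/2$ where $s$ is the common edge length of the octahedra. Now unfold two consecutive faces $F$ and $F'$ across their shared edge $e$ into the Euclidean plane; since both are equilateral, their union is a rhombus built from two equilateral triangles, whose diagonals are $e$ and the segment joining the two apexes. The clipped vertex of $F$ lies on $e$, its Petrie segment is parallel to the side of the rhombus opposite that vertex, and the alternation condition forces the clipped vertex of $F'$ to be the \emph{other} endpoint of $e$; a one-line computation then shows the Petrie segment in $F'$ is parallel to the opposite side of the rhombus — the same direction — and meets the first segment at the midpoint of $e$. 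So the two segments line up into a single straight segment of the development, and iterating, the entire Petrie polygon develops to a straight line. Thus it is a geodesic, and since it meets the $1$-skeleton only at edge midpoints it avoids every vertex, i.e.\ every cone point; hence it is a genuine smooth geodesic of the flat cone metric on $\Pi$ that descends to one on $X$ (this is the sense in which these Petrie polygons are "automatically smooth").

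For closure on $X$, note that a Petrie polygon is propagated by a deterministic \emph{and invertible} local rule: knowing the last edge crossed and the side on which the last clipped vertex lies determines the next edge and the next clipped side, and the rule runs backwards equally well. Encoding this datum as one of the finitely many flags of the map $X$ (there are $32 \cdot 6 = 192$ of them), with the rule realized as the usual ``Petrie'' element of the monodromy group acting on flags, we obtain a permutation $\rho$ of a finite set. The identifications of Theorem~\ref{thm funda} are by translations, hence orientation preserving, so they respect the left/right bookkeeping and $\rho$ descends to $X$. Every orbit of a permutation of a finite set is a cycle, so every Petrie polygon closes up after finitely many segments — exactly the assertion that it corresponds to a closed curve on $X$, which by (a) is a geodesic. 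Its length is then $N\cdot s/2$, where $N$ is the length of the corresponding $\rho$-cycle.

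It remains to show all the relevant $\rho$-cycles have equal length. This follows from any group of isometries of $X$ that commutes with $\rho$ and is transitive on the Petrie states: the flag-transitivity of $\mathrm{Aut}(X)$ proved in Section~\ref{automorphisms} gives it immediately, but already the symmetries visible here — the order-three rotation of $X_0$ used in the proof of Theorem~\ref{thm funda}, the three translation identifications, and the mirror symmetry of Figure~\ref{funda} — should suffice to carry any Petrie state to any other. Hence all Petrie geodesics share one common length, computable by tracing a single Petrie polygon through $X_0$. The unfolding step in (a) is routine; I expect the real work to be the closure/equal-length step — verifying that the propagation rule descends cleanly to the $32$-face quotient and that enough symmetry is in hand at this point to force the $\rho$-cycles to be equally long without invoking the full automorphism group of Section~\ref{automorphisms}.
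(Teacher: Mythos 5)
Your parts (a) and (b) are sound, and in fact supply details the paper leaves implicit: the unfolding-into-a-rhombus computation justifies the bare assertion that Petrie polygons are ``automatically smooth,'' and closure of every Petrie polygon in the quotient does follow from the propagation rule being an invertible map on the finite flag set of $X$. The genuine gap is in the equal-length step (c). Your primary route --- invoking the flag-transitivity of $\mathrm{Aut}(X)$ from Section~\ref{automorphisms} --- is circular in the paper's logical order: that flag-transitivity is established from the hyperbolic $16$-gon model, whose validity (the theorem that the dotted-line identifications reproduce $X$) is itself proved using the present theorem. And your fallback fails outright: the euclidean symmetries of $X_0$ (the order-three rotation, the three translations, any mirror) all preserve the distinction between Type 1 and Type 2 octahedra, since a Type 1 octahedron retains six of its faces and a Type 2 only four. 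They therefore can never carry a Petrie polygon that is the equatorial band of a single Type 1 octahedron onto a Petrie polygon that traverses Type 2 octahedra, so this group is not transitive on the Petrie states (the paper itself notes that the euclidean symmetry group is not flag-transitive). A commuting group that merely permutes \emph{some} of the $\rho$-orbits among themselves does not force all orbits to have equal length.

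What is needed instead --- and what the paper does --- is a direct two-case count. A Petrie polygon that stays inside one Type 1 octahedron runs around its antiprismatic band of six triangles and closes with combinatorial length $6$. A Petrie polygon that leaves a Type 1 octahedron enters a Type 2 one; tracing it through the fundamental piece of Figure~\ref{funda}, one checks that it crosses exactly six faces before reaching a pair of edges identified by the translations of Theorem~\ref{thm funda}, hence it too closes with length $6$. Your framework accommodates this perfectly: it amounts to verifying by hand that every $\rho$-cycle has length $6$, i.e.\ computing the common value $N\cdot s/2 = 3s$ that your argument leaves undetermined. So keep (a) and (b), but replace the symmetry argument in (c) with this explicit verification.
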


\begin{proof} Since Type 1 octahedra are strips of six triangles, the Petrie polygons that remain in Type 1 octahedra form closed geodesics and have length six. For the cases that involve Type 2 octahedra, we can refer to Figure~\ref{funda} and recall the proof of Theorem~\ref{thm funda} where we identified missing faces to each other. Notice that if we pick a pair of edges in $X_0$ that are identified to each other, we can see that there is a Petrie polygon that passes six faces and connects the two edges. Hence all Petrie polygons on $\Pi$ are closed geodesics on $X$ and they all have the same length. These will be shown as dotted lines in Figure~\ref{hyperbolic_tiling}.
\end{proof}

\begin{rem} We can see in Figure~\ref{hyperbolic_tiling} that the set of geodesics is invariant under an order eight rotation centered at the center vertex.
\end{rem}

Now we prove that Figure~\ref{hyperbolic_tiling} represents the fundamental piece.\\

\begin{thm} The 16-gon that is bounded by the solid lines in Figure~\ref{hyperbolic_tiling} represents the fundamental piece. Moreover, the identification of edges by dotted lines gives us the same surface as $X.$
\end{thm}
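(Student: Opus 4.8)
The plan is to verify that the hyperbolic $16$-gon built out of $32$ copies of the $(\frac\pi4,\frac\pi4,\frac\pi4)$-triangle carries exactly the combinatorial data of $X_0$, and that the dotted-line identifications match the face identifications of Theorem~\ref{thm funda}. First I would set up a bijection between the $32$ triangles of Figure~\ref{hyperbolic_tiling} and the $32$ triangular faces of $X_0$: each of the six octahedra in Figure~\ref{funda} contributes a ``fan'' of triangles around the vertices it meets, and one checks that the center vertex of the $16$-gon is exactly a valency-$8$ vertex of $\Pi$ so that its eight incident triangles close up as the central octagonal block. Because every vertex of $\Pi$ has valency $8$, the link of each vertex is an octagon, and gluing the triangles edge-to-edge following the combinatorics of $\Pi$ forces the angle at each vertex to be $8\cdot\frac\pi4 = 2\pi$ — so the resulting surface is genuinely a cone-metric-free hyperbolic surface near interior vertices, and the only thing to confirm is that the boundary edges of the $16$-gon are glued in pairs consistently.

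Next I would argue the boundary identification. The $16$-gon has $16$ boundary edges; I would show these come in the pattern of (a) the six ``missing-face'' edges of the octahedra, which by the proof of Theorem~\ref{thm funda} are identified in three pairs via the order-three symmetry, together with (b) the edges created by cutting the fundamental piece open to lay it flat in the disk, which are identified in the obvious ``zipper'' way. The key consistency check is that walking around the boundary of the $16$-gon and applying the edge-pairing returns every vertex to a single orbit with total angle $2\pi$, i.e. the quotient has no extra cone points and the Euler characteristic bookkeeping $12 - 48 + 32 = -4$ of Theorem~\ref{thm funda} is reproduced. I would invoke the order-eight rotational symmetry about the center vertex (noted in the Remark) to cut the verification down to a single fundamental sector, so that only one boundary arc and its partner need to be checked by hand.

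Finally, to see that the dotted-line identifications give the same surface as $X$, I would match the dotted Petrie-polygon arcs in Figure~\ref{hyperbolic_tiling} with the closed geodesics produced in the previous theorem: each dotted line enters and exits the $16$-gon through a pair of boundary edges that, under the identification of part (a) above, are precisely the pair of missing octahedral faces joined by a Petrie polygon of length six. Since those face-pairings are exactly the ones defining $X_0$ as a quotient of $\Pi$, and since a closed surface is determined by a polygon together with its edge-pairing, the two descriptions coincide.

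The main obstacle I anticipate is bookkeeping rather than conceptual: keeping the $32$ triangles consistently labeled across the euclidean picture (Figure~\ref{funda}) and the hyperbolic picture (Figure~\ref{hyperbolic_tiling}) so that one can be certain the edge-pairing on the $16$-gon is the image of the octahedral face-pairing and not some other gluing with the same number of triangles. Exploiting the order-eight and order-three symmetries to reduce to a fundamental sector is what makes this tractable; without that reduction the case analysis over all $16$ boundary edges would be the hard part.
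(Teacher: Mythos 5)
Your proposal is correct and follows essentially the same route as the paper: identify the $32$ hyperbolic triangles with the faces of $X_0$ (recognizing the Type 1 octahedra as strips of six triangles) and then use the closed Petrie polygons of length six from the preceding theorem --- the dotted lines --- to see that the boundary edge-pairings of the $16$-gon reproduce the face identifications of Theorem~\ref{thm funda}. The additional consistency checks you propose (vertex angle counts, Euler characteristic, reduction to a sector via the order-eight symmetry) are sound but go beyond what the paper's own, much terser, argument records.
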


\begin{proof} The 16-gon bounded by the solid lines in Figure~\ref{hyperbolic_tiling} consists of 32 triangles. Triangles that with darker shading come from Type 1 octahedra, where the rest come from Type 2 octahedra. Specifically, since Type 1 octahedra are strips of six triangles, they are easy to recognize. Moreover, the dotted lines that lie in these strips show us how to identify two edges of the 16-gon together. However we already know from the previous theorem that all Petrie polygons are closed whether they remain in Type 1 octahedra or not. Hence the surface that we get by the identification of edges given by dotted lines gives us the same surface as $X.$
\end{proof}

\section{Automorphisms}
\label{automorphisms}

We can see from Figure~\ref{funda} that there is an order three rotational symmetry in $X_0.$ In this section, we will use the hyperbolic structure of the fundamental piece to find automorphisms of $X,$ that are not necessarily induced from its polyhedral structure on $X_0.$ At the end of this section once we find all automorphisms of $X$ and show that $\textrm{Aut}(X)$ acts transitively on flags, we will be able to conclude that $X$ is a regular surface.\\

In this section we will use a notion of a \emph{flag} which is a triple $(v, e, f)$ where $f$ is a face, $e$ is one of the edges of $f,$ and $v$ is one of the two boundary points of $e.$ Since we are interested in orientation preserving automorphisms once we choose a face $f$ and one of its edges $e,$ we say that there is only one choice of $v.$ In other words the flags may be of the shapes $\leftharpoondown$ or $\rightharpoonup$ but we do not accept shapes $\leftharpoonup$ and $\rightharpoondown$ as flags. We will prove that we only need one order three rotation and one order eight rotation to generate the automorphim group that preserves flags on this surface.\\

In Figure~\ref{funda}, there is an order three symmetry realized as the rotation about a midpoint of a face of a Type 2 octahedron. Via this rotation, the faces of Type 1 (resp. Type 2) octahedra remained in Type 1 (resp. Type 2.) This symmetry induces also an order three rotational symmetry in $X$ that fixes a midpoint of a triangle as we can see in Figure~\ref{hyperbolic_tiling}. In fact, these symmetries map vertices to vertices, edges to edges, and faces to faces hence we call this symmetry flag-transitive.\\

Recall the remark in the previous section that there is an order eight rotational symmetry that fixed the center of the Figure~\ref{hyperbolic_tiling}. This symmetry is also flag-transitive. Notice that this is not induced from the polyhedral structure on $X_0$ but comes from the hyperbolic structure.\\

\begin{thm} Given two flags on $X,$ there exists an automorphism that sends one flag to the other. Moreover, $|\textrm{Aut}(X)| = 96.$
\end{thm}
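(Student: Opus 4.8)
The plan is to pin down $|\textrm{Aut}(X)|$ by trapping it between a free-action upper bound and a transitivity lower bound. First I would count the oriented flags: once a face $f$ and an edge $e\subset\partial f$ are fixed the orientation determines $v$, so each of the $32$ triangular faces carries exactly three oriented flags and there are $96$ in all, in agreement with the counts $12\cdot 8$ at the vertices and $2E=2\cdot 48$ along the edges. Next I would realise every automorphism of $X$ as a hyperbolic isometry: an automorphism permutes the $32$ faces, each a congruent $\left(\frac{\pi}{4},\frac{\pi}{4},\frac{\pi}{4}\right)$ triangle, and since the cone angle at each vertex is $8\cdot\frac{\pi}{4}=2\pi$ these triangles assemble into a genuine hyperbolic metric on $X$ on which the automorphism acts isometrically. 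An isometry fixing an oriented flag $(v,e,f)$ fixes the side $e$ of the triangle $f$ pointwise, together with its orientation, hence fixes $f$ pointwise, hence fixes an open set and is the identity. Thus $\textrm{Aut}(X)$ acts freely on the $96$ oriented flags, so $|\textrm{Aut}(X)|\le 96$, with equality exactly when the action is transitive.

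It remains to prove transitivity. Here I would use the order-$3$ rotation $\rho$ about the centre of a face and the order-$8$ rotation $\sigma$ about a vertex exhibited above; reading off Figure~\ref{hyperbolic_tiling}, we may take the fixed vertex $v_0$ of $\sigma$ to be a vertex of the fixed face $f_0$ of $\rho$, lying on an edge $e_0\subset\partial f_0$, so that $\Phi_0=(v_0,e_0,f_0)$ is a flag. Set $G:=\langle\rho,\sigma\rangle$. Since $\rho$ acts as a $3$-cycle on the three flags of $f_0$ and $\sigma$ as an $8$-cycle on the eight flags at $v_0$, the orbit $G\Phi_0$ already contains every flag of $f_0$ and every flag at $v_0$, in particular a flag in each of the eight faces meeting $v_0$. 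For such a face $g\ni v_0$ the conjugate $\rho_g:=\sigma^k\rho\sigma^{-k}$ (with $\sigma^k(f_0)=g$) is an order-$3$ rotation about $g$ lying in $G$, and conjugating $\sigma$ by powers of $\rho_g$ yields order-$8$ rotations about the other two vertices of $g$. Iterating, $G$ contains an order-$8$ rotation about every neighbour of $v_0$ and an order-$3$ rotation about every face of the star of $v_0$; propagating this along the $1$-skeleton of $X$, which is connected, shows that $G$ contains an order-$8$ rotation about \emph{every} vertex and an order-$3$ rotation about \emph{every} face. Such a group is transitive on all $96$ flags, and together with freeness this forces $|G|=|\textrm{Aut}(X)|=96$ and the claimed flag-transitivity. (Should $\textrm{Aut}(X)$ denote the full conformal automorphism group, these $96$ map automorphisms still form a subgroup of it, while Hurwitz's bound gives $|\textrm{Aut}(X)|\le 84(g-1)=168<192$, so divisibility forces $|\textrm{Aut}(X)|=96$ regardless.)

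The step I expect to be the real obstacle is the transitivity claim, and more precisely the geometric input that $\rho$ and $\sigma$ may be placed in ``flag position'' as above --- equivalently that $(\rho\sigma)^2=1$, so that $\rho,\sigma$ form a generating pair of the $(2,3,8)$ von Dyck group $D(2,3,8)=\langle b,c\mid b^3=c^8=(bc)^2=1\rangle$. In terms of the uniformisation $X=\mathbb{H}/\Gamma$ this says exactly that $\Gamma$ is normal in $D(2,3,8)$, whence $\textrm{Aut}(X)\supseteq D(2,3,8)/\Gamma$, a group whose order equals the ratio of hyperbolic areas $8\pi/(\pi/12)=96$. Carrying this out rigorously reduces to reading the relation $(\rho\sigma)^2=1$ off the identified $16$-gon of Figure~\ref{hyperbolic_tiling} and checking that its side-pairings impose no further relation; the rest --- the cycle structure of $\rho$ and $\sigma$ on flags and the propagation argument --- is routine bookkeeping with the $32$ triangles.
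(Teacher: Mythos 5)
Your proof is correct, but it reaches the conclusion by a genuinely different route than the paper. The paper's proof is a direct enumeration: it takes the order-$8$ rotation $a$ and the order-$3$ rotation $b$, labels each of the $96$ flags by a word in $a$ and $b$ (the table of $96$ distinct words), and reads off simultaneously that $\langle a,b\rangle$ is flag-transitive and has order $96$, obtaining a presentation of the group as a by-product. You instead trap the order between two bounds: the free action of triangulation-preserving isometries on oriented flags gives $|\langle\rho,\sigma\rangle|\le 96$, and the conjugation/propagation argument along the connected $1$-skeleton gives transitivity, hence $|\langle\rho,\sigma\rangle|=96$ with no word bookkeeping. What the paper's approach buys is the explicit presentation $\langle a,b\mid a^8=b^3=(ab)^2=(a^2b^2)^3=(a^4b^2)^3=1\rangle$, which is used implicitly later; what yours buys is a shorter, more conceptual argument that does not depend on verifying a $96$-entry table, plus a clean reason why these $96$ automorphisms exhaust the \emph{full} conformal automorphism group. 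On that last point your argument is actually more complete than the paper's: your main text asserts that every conformal automorphism ``permutes the $32$ faces,'' which is not automatic (a conformal automorphism is a hyperbolic isometry by uniformization, but need not preserve the chosen triangulation), so the free-action bound as stated only applies to map automorphisms; however, your parenthetical via Hurwitz --- $96$ divides $|\textrm{Aut}(X)|\le 84(g-1)=168<192$ --- closes this gap rigorously, whereas the paper defers the identification of the full automorphism group to a later remark about Weierstrass points. The aside about realizing $\rho,\sigma$ as a $(2,3,8)$ generating pair is consistent (the orbifold Euler characteristic computation $8\pi/(\pi/12)=96$ checks out) but is not actually needed for the propagation argument, which only uses that the fixed vertex of $\sigma$ lies on the fixed face of $\rho$.
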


\begin{proof} In the following figure, there is an order eight rotation that fixes the center of the tiling which we denote by $a.$ This map sends flags to flags hence the group generated by $a$ is flag-transitive. Secondly, we can denote an order three rotation by $b,$ which fixes the midpoint of one of the triangles and permutes the triangle's edges. Again $b$ preserves flags hence any group generated by $a$ and $b$ is flag-transitive. Additionally, we can think of an order two symmetry that preserves any given edge $e$ and fixes the midpoint. However $a b = b^{-1} a^{-1}$ represents such a rotation so we will only consider the group generated by two elements $a$ and $b.$

\begin{figure}[htbp] 
   \centering
   \includegraphics[width=3in]{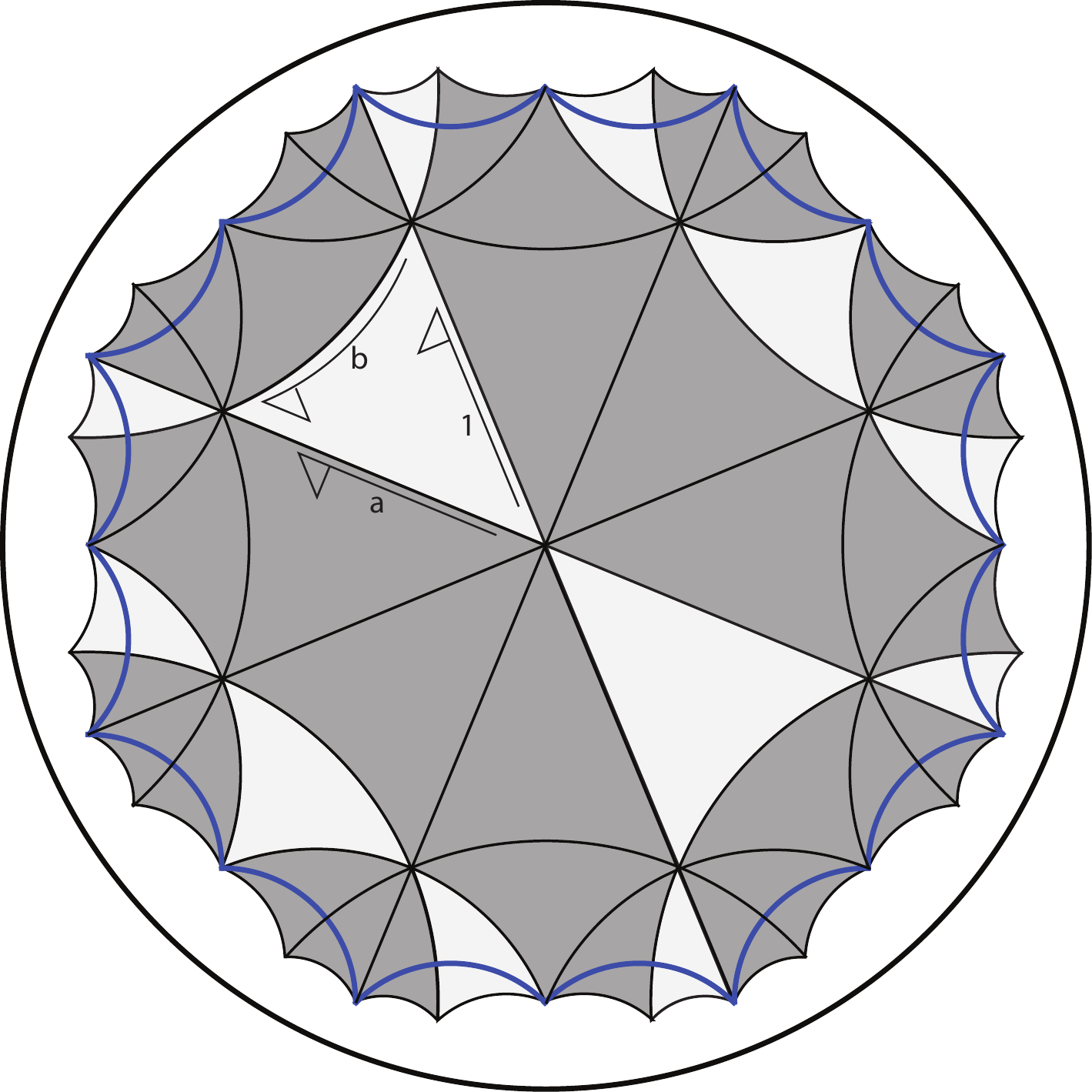} 
   \caption{Automorphisms}
   \label{aut}
\end{figure}

We can name each flag by a word generated by $a$ and $b.$ The correspondence between words and flags gives us concrete relations between generators. We get a list of 96 words which are all distinct.

\begin{table}
\begin{tabular}[t]{|c|c|c|c|c|c|c|c|}
\hline
1 &&&&&&& \\ \hline
$a$ &&&&&&& \\ \hline
$a^2$ & $a^2 b$ & $a ^2 b^2$ & $a^2 b^2 a$ &&&& \\ \hline
$a^3$ & $a^3 b$ & $a^3 b^2$ & $a^3 b^2 a$ &&&& \\ \hline
$a^4$ & $a^4 b$ & $a^4 b^2$ & $a^4 b^2 a$ & $a^4 b^2 a^2$ & $a^4 b^2 a^3$ && \\ \hline
$a^5$ & $a^5 b$ & $a^5 b^2$ & $a^5 b^2 a$ & $a^5 b^2 a^2$ & $a^5 b^2 a^3$ && \\ \hline
$a^6$ & $a^6 b$ & $a^6 b^2$ & $a^6 b^2 a$ & $a^6 b^2 a^2$ & $a^6 b^2 a^3$ & $a^6 b^2 a^4$ & $a^6 b^2 a^5$  \\ \hline
$a^7$ & $a^7 b$ &&&&&& \\ \hline
$b$ &&&&&&& \\ \hline
$b a$ &&&&&&& \\ \hline
$b a^2$ & $b a^2 b$ & $b a^2 b^2$ & $b a^2 b^2 a$ &&&& \\ \hline
$b a^3$ & $b a^3 b$ & $b a^3 b^2$ & $b a^3 b^2 a$ &&&& \\ \hline
$b a^4$ & $b a^4 b$ & $b a^4 b^2$ & $b a^4 b^2 a$ & $b a^4 b^2 a^2$ & $b a^4 b^2 a^3$ && \\ \hline
$b a^5$ & $b a^5 b$ & $b a^5 b^2$ & $b a^5 b^2 a$ & $b a^5 b^2 a^2$ & $b a^5 b^2 a^3$ && \\ \hline
$b a^6$ & $b a^6 b$ & $b a^6 b^2$ & $b a^6 b^2 a$ & $b a^6 b^2 a^2$ & $b a^6 b^2 a^3$ & $b a^6 b^2 a^4$ & $b a^6 b^2 a^5$ \\ \hline
$b a^7$ & $b a^7 b$ &&&&&& \\ \hline
$b^2$ &&&&&&& \\ \hline
$b^2 a$ &&&&&&& \\ \hline
$b^2 a^2$ & $b^2 a^2 b$ & $b^2 a^2 b^2$ & $b^2 a^2 b^2 a$ &&&& \\ \hline
$b^2 a^3$ & $b^2 a^3 b$ & $b^2 a^3 b^2$ & $b^2 a^3 b^2 a$ &&&& \\ \hline
$b^2 a^4$ & $b^2 a^4 b$ & $b^2 a^4 b^2$ & $b^2 a^4 b^2 a$ & $b^2 a^4 b^2 a^2$ & $b^2 a^4 b^2 a^3$ && \\ \hline
$b^2 a^5$ & $b^2 a^5 b$ & $b^2 a^5 b^2$ & $b^2 a^5 b^2 a$ & $b^2 a^5 b^2 a^2$ & $b^2 a^5 b^2 a^3$ && \\ \hline
$b^2 a^6$ & $b^2 a^6 b$ & $b^2 a^6 b^2$ & $b^2 a^6 b^2 a$ & $b^2 a^6 b^2 a^2$ & $b^2 a^6 b^2 a^3$ & $b^2 a^6 b^2 a^4$ & $b^2 a^6 b^2 a^5$ \\ \hline
$b^2 a^7$ & $b^2 a^7 b$ &&&&&& \\ \hline
\end{tabular}
\caption{Automorphisms generated by $a$ and $b$}
\end{table}

Since there are 32 faces and three flags on each face, there are 96 flags in total, which is equal to the number of words we have generated. We have $\textrm{Aut}(X) = \langle a, b \mid a^8 = b^3 = (a b)^2 = (a^2 b^2)^3 = (a^4 b^2)^3 = 1 \rangle$ and $\lvert \textrm{Aut}(X) \rvert = 96.$ 
\end{proof}

\begin{rem} By inspection, $X \rightarrow X / \langle a \rangle$ is a cyclically branched eightfold cover over a thriced punctured sphere $X / \langle a \rangle.$ Klein's quartic in \cite{KW} is also viewed as a sevenfold cyclically branced cover over a thriced punctured sphere.\\
\end{rem}

The automorphism group of the surface acts transitively on flags, in other words, the surface is regular. This implies that all 12 vertices are similar, which we will use in Section~\ref{Weierstrass} to prove that all vertices of $\Pi$ are Weierstrass points of $X.$

\section{Holomorphic 1-forms}
\label{form}

In this section our goal is to find holomorphic 1-forms. We will do so by giving $X$ a translational structure that is compatible with its hyperbolic structure. One can define such a structure on a polyhedral surface by using the notion of cone points which we will mention soon. First by Riemann mapping theorem, we can map the hyperbolic triangle $\triangle \, q_1 \, q_2 \, q_3$ to a euclidean triangle $\triangle \, p_1 \, p_2 \, p_3$ via a conformal map. Then by Schwarz reflection principle we can map the rest of the hyperbolic triangles to euclidean ones as shows in Figure~\ref{flat}. Then on this tiling we can find translation structures which are geometric representations of holomorphic 1-forms on $X.$ Once we find a basis of holomorphic 1-forms we can find an algebraic equation that represents this surface.\\

\begin{figure}[htbp] 
   \centering
   \includegraphics[width=4in]{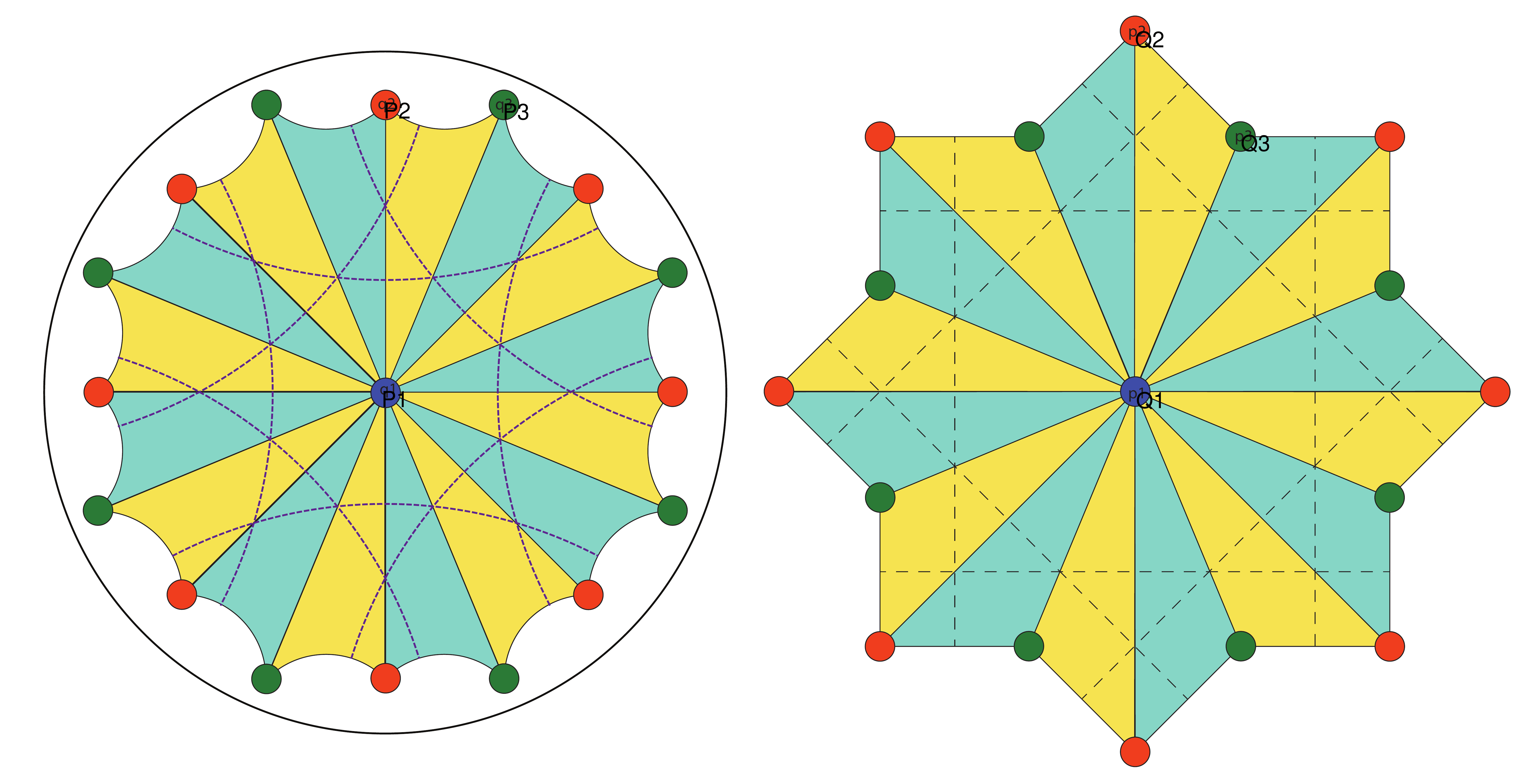} 
   \caption{}
   \label{flat}
\end{figure}

Following the identification of edges in the hyperbolic representation of $X,$ we get a flat 16-gon on the right where every pair of parallel edges are identified. Hence we get a translation structure defined everywhere except at the vertices of the 16-gon. We can put a canonical translation structure $\textrm{d} z$ everywhere except the vertices. Along edges we have $z \mapsto z + c$ for each pair of edges, hence we still have $\textrm{d} z.$ However if we go around say the vertex $p_3$ by the identification of edges, the cone angle is greater than $2 \pi$ and hence the vertex becomes a cone point. The cone angle $\frac{5 \pi}{4} \times 8 = 10 \pi$ shows that in local coordinates we have charts behaving as $z^5.$ We take the exterior derivative and get $5 z^4 \textrm{d} z,$ which is a holomorphic 1-form that has a zero of order four at that point.\\

Here we get a holomophic 1-form that has zeros of order $0, 1, 4$ at the three branch points $p_1, p_2, p_3$ respectively, which corresponds to a holomorphic function that has zeros of order $1, 2, 5$ at $p_1, p_2, p_3$ respectively. Since $X$ is a cyclically branched cover we can take its multipliers and achieve maps that have order $2, 4, 2$ and $5, 2, 1$ at $p_1, p_2, p$ respectively.\\

Now to find an algebraic equation that describes this surface we take the divisors of holomorphic 1-forms which form the basis.

$$\begin{array}{cccc}(\omega_1) = & & p_2 & + 4 p_3\\
(\omega_2) = & p_1 & + 3 p_2 & + p_3\\
(\omega_3) = & 4 p_1 & + p_2 & 
\end{array}$$

We then define holomorphic functions as in \cite{KW}.

$$(f) := \left(\frac{\omega_1}{\omega_3}\right) = -4 p_1 + 4 p_3, \qquad (g) := \left(\frac{\omega_2}{\omega_3}\right) = -3 p_1 + 2 p_2 + p_3.$$

$$(f^2) = -8 p_1 + 8 p_3, \qquad \left(\frac{g^4}{f}\right) = -8 p_1 + 8 p_2.$$

After proper scaling of the functions we get

$$\begin{array}{ll}f^2 -1 = \frac{g^4}{f} & \Leftrightarrow f^3 - f = g^4\\
& \Rightarrow \omega_1^3 \omega_3 - \omega_1 \omega_3^3 = \omega_2^4.
\end{array}$$

\section{Weierstrass points}
\label{Weierstrass}

As an application of Riemann-Roch theorem, we can find Weierstrass points of a surface from the basis of 1-forms. At a generic point on a compact Riemann surface of genus $g,$ there is a basis of forms that each have zeros of order $0, 1, \ldots , g - 1$ resp. at that point. If not, there is a gap in this sequence. As in our case, we have zeros of order 0, 1, 4. We define the weight of a point $\textrm{wt}_p$ by finding the difference of the two sequences. So in our case the weight of a cone point is $(0 - 0) + (1 - 1) + (4 - 2) = 2.$ Points with positive weight are called Weierstrass points.\\

Weierstrass points carry information on the automorphisms of a Riemann surface in a way that all automorphisms preserve Weierstrass points and their weights. In section~\ref{automorphisms} we found all flag-transitive automorphisms and therefore were able to conclude that we found all automorphisms on $X.$

\begin{rem} A compact Riemann surface of genus $g \geq 2$ is hyperelliptic if and only if the weight of every Weierstrass point is $(0 - 0) + (2 - 1) + \cdots + \left( (2 g - 2) - (g - 1)\right) = \frac{(g - 1) g}{2}.$ Since we have a Weierstrass point on $X$ that has weight 2, our surface is not hyperelliptic.
\end{rem}

We will use the following theorem from \cite{FK} to prove the main theorem.\\

\begin{thm*} On a compact Riemann surface of genus $g \geq 1,$ there are finitely many Weierstrass points. Moreover, the sum of weights of the Weierstrass points is $(g - 1) g (g + 1).$
\end{thm*}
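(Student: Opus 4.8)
The plan is to realize the collection of Weierstrass points, weighted by their weights, as the zero divisor of a single holomorphic section of a line bundle, and then to compute the degree of that bundle. Concretely, fix a basis $\omega_1, \dots, \omega_g$ of the space of holomorphic $1$-forms on $X$; in a local holomorphic coordinate $z$ write $\omega_i = f_i(z)\,dz$ and form the Wronskian
$$ W_z \;=\; \det\bigl( f_i^{(j-1)}(z) \bigr)_{1 \le i, j \le g}. $$
The first step is to verify that under a coordinate change $z = z(w)$ one has $W_w = W_z \cdot (dz/dw)^{g(g+1)/2}$, so that the local expressions $W_z\,(dz)^{g(g+1)/2}$ glue to a global holomorphic section $\Phi$ of $K^{\otimes g(g+1)/2}$, where $K$ is the canonical bundle. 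I expect this transformation law to be the main obstacle: one proves it by induction on $g$, observing that applying the chain rule to each derivative $f_i^{(j-1)}$ produces the factor $(dz/dw)^{j-1}$ together with lower-order terms that are killed inside the determinant by column operations, and the exponents accumulate to $0 + 1 + \dots + (g-1) = \binom{g}{2}$ from the differentiations plus one more factor $dz/dw$ from rewriting $\omega_i = f_i\,dz$, for a total of $\binom{g}{2} + \binom{g}{2}$... i.e. $g(g-1)/2$ from the rows of the Wronskian and $g$ from the forms, giving exponent $g(g+1)/2$.

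Second, I would note that $\Phi \not\equiv 0$: since $\omega_1, \dots, \omega_g$ are linearly independent over $\C$, the functions $f_1, \dots, f_g$ are linearly independent holomorphic functions on any coordinate patch, so their Wronskian is not identically zero there, and by the identity principle and connectedness $\Phi$ is not identically zero on $X$. (For $g = 1$ this degenerates correctly: $W_z = f_1$, $K$ is trivial, $\Phi$ is a nonzero constant, so there are no Weierstrass points, in agreement with $(g-1)g(g+1) = 0$.)

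Third, I would identify $\mathrm{ord}_p \Phi$ with the weight $\textrm{wt}_p$. Choose a coordinate $z$ centered at $p$ and a basis adapted to $p$, i.e. with $\mathrm{ord}_p \omega_i = n_i$ where $n_1 < n_2 < \cdots < n_g$ is the strictly increasing list of vanishing orders realized at $p$ by holomorphic $1$-forms, so that $\textrm{wt}_p = \sum_i \bigl(n_i - (i-1)\bigr)$ matches the ``difference of sequences'' definition used above. Writing $f_i = z^{n_i}(c_i + O(z))$ with $c_i \ne 0$ and factoring $z^{n_i}$ out of the $i$-th row, the leading coefficient of $W_z$ at $z = 0$ is a nonzero multiple of the Wronskian of the monomials $z^{n_1}, \dots, z^{n_g}$, which equals $\bigl(\prod_{i<j}(n_j - n_i)\bigr)\, z^{\sum_i n_i - \binom{g}{2}}$ and is nonzero by the Vandermonde identity. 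Hence $\mathrm{ord}_p \Phi = \sum_i n_i - \binom{g}{2} = \sum_i \bigl(n_i - (i-1)\bigr) = \textrm{wt}_p$. In particular $\textrm{wt}_p > 0$ only at the finitely many zeros of $\Phi$, which gives the finiteness statement.

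Finally, summing over $X$ and using that a nonzero holomorphic section of a line bundle of degree $d$ on a compact Riemann surface has exactly $d$ zeros counted with multiplicity, together with $\deg K = 2g - 2$:
$$ \sum_{p \in X} \textrm{wt}_p \;=\; \deg\bigl(K^{\otimes g(g+1)/2}\bigr) \;=\; \frac{g(g+1)}{2}\deg K \;=\; \frac{g(g+1)}{2}(2g - 2) \;=\; (g-1)\,g\,(g+1), $$
which is the asserted value. Apart from the Wronskian transformation bookkeeping, the only external inputs are the Riemann--Roch description of gap sequences (to justify the adapted basis) and $\deg K = 2g-2$, both standard.
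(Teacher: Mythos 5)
The paper offers no proof of this statement; it is quoted verbatim as a known theorem from Farkas--Kra \cite{FK}, and your argument is correct and is essentially the standard Wronskian proof given in that reference. Your bookkeeping checks out: the Wronskian of a basis of $1$-forms transforms as a section of $K^{\otimes g(g+1)/2}$ (weight $g$ from the forms plus $\binom{g}{2}$ from the differentiations), it is not identically zero by linear independence, its vanishing order at $p$ equals $\sum_i n_i - \binom{g}{2} = \textrm{wt}_p$ by the generalized Vandermonde computation, and the degree count $\tfrac{g(g+1)}{2}(2g-2) = (g-1)g(g+1)$ finishes the proof.
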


\begin{thm} [Main Theorem] All vertices on $\Pi$ are Weierstrass points.
\end{thm}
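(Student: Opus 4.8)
The plan is to combine the count of Weierstrass-point weights coming from the symmetry group with the theorem of Farkas–Kra on the total weight, and then match this against the vertices of $\Pi$. Since $g=3$, the Farkas–Kra theorem tells us that the sum of the weights of all Weierstrass points equals $(g-1)g(g+1) = 2\cdot 3\cdot 4 = 24$. First I would recall from Section~\ref{form} that each of the twelve vertices of $\Pi$ maps, in the flat $16$-gon picture, to a cone point with cone angle $10\pi$, so the sequence of vanishing orders of a suitable basis of holomorphic $1$-forms at such a point is $0,1,4$; hence by the weight computation already carried out in Section~\ref{Weierstrass} every vertex has $\textrm{wt}_p = 2$. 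With twelve vertices this accounts for a total weight of $12 \cdot 2 = 24$, which is exactly the total guaranteed by Farkas–Kra.

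The key point is then that there is no room left: since weights are nonnegative and the vertices already exhaust the total weight $24$, there can be no Weierstrass point other than the twelve vertices, and the twelve vertices are all forced to have weight exactly $2$. So the argument has three steps: (1) identify the twelve vertices of $\Pi$ as the cone points of the translation structure and compute their weight to be $2$ each; (2) observe that the regularity of $X$ proved in Section~\ref{automorphisms} — in particular that $\textrm{Aut}(X)$ is transitive on the set of vertices — shows the twelve vertices all have the \emph{same} weight, so it suffices to verify the weight at one of them; (3) invoke Farkas–Kra to see $12\cdot 2 = 24 = (g-1)g(g+1)$ closes the budget, so the set of Weierstrass points is precisely the vertex set. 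Strictly speaking, for the statement ``all vertices are Weierstrass points'' one only needs step (1): a point of weight $2 > 0$ is by definition a Weierstrass point. Steps (2) and (3) give the sharper statement that the vertices are \emph{all} the Weierstrass points, which is what the abstract advertises.

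I expect the main obstacle to be step (1): one must be careful that the vanishing sequence at a vertex really is $0,1,4$ and not, say, $0,1,3$ or $0,2,3$. This requires knowing a full basis $\omega_1,\omega_2,\omega_3$ of holomorphic $1$-forms and their orders at the vertex simultaneously, not merely that one form has a zero of order $4$ there. Here the cyclic structure of the cover $X \to X/\langle a\rangle$ from the Remark in Section~\ref{automorphisms} does the work: the eightfold cyclic symmetry decomposes the space of holomorphic $1$-forms into character eigenspaces, and the divisors
$$
(\omega_1) = p_2 + 4p_3, \quad (\omega_2) = p_1 + 3p_2 + p_3, \quad (\omega_3) = 4p_1 + p_2
$$
computed in Section~\ref{form} show that at $p_3$ (a vertex) the three forms vanish to orders $4,1,0$, giving the sequence $0,1,4$ as claimed; by the transitivity of $\textrm{Aut}(X)$ on vertices the same holds at every vertex. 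Once this is in hand the weight is $(0-0)+(1-1)+(4-2) = 2$ and the rest is the counting argument above. A minor point to check is that the twelve cone points of the flat structure are exactly the twelve vertices of $\Pi$ and that the three branch points $p_1, p_2, p_3$ of the cyclic cover each lift to four vertices of $\Pi$, so that $3 \times 4 = 12$ is consistent; this follows from tracking the edge identifications in Figure~\ref{hyperbolic_tiling} through the Riemann-mapping step of Section~\ref{form}.
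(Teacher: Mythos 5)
Your proof is essentially the paper's: weight $2$ at one vertex from the $0,1,4$ vanishing sequence, vertex-transitivity of $\textrm{Aut}(X)$ to propagate this to all twelve vertices, and the Farkas--Kra total $(g-1)g(g+1) = 24 = 12 \cdot 2$ to rule out any further Weierstrass points. One caveat on your final consistency check: the three branch points of the eightfold cyclic cover $X \to X/\langle a \rangle$ cannot each lift to four vertices, since Riemann--Hurwitz for a genus $3$ eightfold cyclic cover of the sphere branched over three points forces ramification indices $(8,8,4)$ and hence fibers of sizes $1,1,2$; but this side remark is not load-bearing, because transitivity already reduces the weight computation to a single vertex.
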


\begin{proof} Since our surface has genus $g = 3,$ the sum of weights over all points of $X$ add up to $2 \cdot 3 \cdot 4 = 24.$ So far, we have found one point with weight $\textrm{wt}_p = 2.$ However in the previous section, we have proved that the vertices are similar hence the rest also have equal weight. Since we have 12 vertices in total, the weights add up to 24.\\

This proves that all vertices of $\Pi$ are Weierstrass points and that there are no other Weierstrass points. 
\end{proof}

\begin{rem} In \cite{M} it is proved that all minimal surfaces of genus $g = 3$ are hyperelliptic. Despite $X$ having such a large automorphism group, it cannot be a polyhedralization of any minimal surface due to the fact that it is not hyperelliptic. 
\end{rem}

\end{document}